\newenvironment{proof}{\noindent {\it Proof.~~}\ }{\  \rule{1mm}{2mm}\medskip}
\newenvironment{proof*}{\noindent {\it Proof.~~}\ }{}
\newenvironment{proofof}[2]{\noindent {\it Proof of #1}~#2: \
}{~\rule{1mm}{2mm}\medskip}
\newtheorem{theorem}{Theorem}
\newtheorem{lemma}[theorem]{Lemma}
\newtheorem{corollary}[theorem]{Corollary}
\def\Z{\mathbb Z}
\newtheorem{theirtheorem}{Theorem}
\newtheorem{theirlemma}[theirtheorem]{Lemma}
\def\Z{\mathbb Z}
\begin{document}
\title{ Distinct Matroid Base Weights  and Additive Theory
 }
\author{ {Y. O. Hamidoune}\thanks{
Universit\'e Pierre et Marie Curie, E. Combinatoire, Case 189, 4
Place Jussieu, 75005 Paris, France. \texttt{yha@ccr.jussieu.fr}}
\and {I.P. da Silva}\thanks{
CELC/Universidade de Lisboa, Faculdade de Ci\^encias, Campo Grande, edif\'icio C6 - Piso 2,
1749-016 Lisboa, Portugal.\texttt{isilva@cii.fc.ul.pt}
} }

\date{}

\maketitle

\begin{abstract}
Let $M$ be a matroid on a set $E$ and let  $w:E\longrightarrow G$ be a weight function, where $G$ is a cyclic group. Assuming that $w(E)$ satisfies the Pollard's Condition (i.e. Every non-zero element  of $w(E)-w(E)$ generates $G$),
we obtain a formulae for the number of distinct base weights. If $|G|$ is a prime, our result coincides with a result Schrijver and Seymour.

We also describe Equality cases in this formulae. In the prime case,
our result generalizes Vosper's Theorem.
\end{abstract}

\section{Introduction}


Let $G$ be a finite cyclic group and let $A,B$ be nonempty subsets of $G.$
The starting point of Minkowski set sum estimation is the inequality
$|A+B|\ge \min(|G|,|A|+|B|-1),$ where
$|G|$ is a prime, proved by Cauchy \cite{CAU} and rediscovered
by Davenport \cite{DAV}. The first generalization of this  result,
 due to Chowla \cite{CHO}, states that $|A+B|\ge \min(|G|,|A|+|B|-1),$ if
 there is a $b\in B$ such that  every non-zero element  of $B-b$ generates $G.$ In order to generalize his extension of the Cauchy-Davenport Theorem \cite{P1} to composite moduli,
Pollard introduced  in \cite{P2} the following more sophisticated Chowla type condition:
Every non-zero element  of $B-B$ generates $G.$

Equality cases of the Cauchy-Davenport were determined by Vosper in \cite{vosper1,vosper2}.
Vosper's Theorem was generalized by Kemperman \cite{KEM}. We need only a light form of Kemperman's result
stated in the beginning of Kemperman's paper.

We need the following combination of Chowla and Kemperman results:

\begin{theirtheorem}(Chowla \cite{CHO}, Kemperman \cite{KEM})\label{vosper}
Let  $A,B$ be non-empty subsets of a cyclic group $G$ with
 $|A|,|B|\ge 2$ such that for some $b\in B,$
 every non-zero element  of $B-b$ generates $G.$ Then   $|A+B|\geq |A|+|B|-1.$

Moreover  $|A+B|= |A|+|B|-1$  if and only if  $A+B$  is an arithmetic progression.
\end{theirtheorem}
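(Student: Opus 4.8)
The assertion bundles two classical facts: the lower bound $|A+B|\ge|A|+|B|-1$ (Chowla \cite{CHO}) and the determination of the equality case (Kemperman \cite{KEM}, which specializes to Vosper's theorem when $|G|$ is prime). I would first recall a proof of the bound, since its stabilizer analysis is reused in the equality case. Normalize first: replacing $B$ by $B-b$ we may assume $0\in B$, so that every non-zero element of $B$ is a generator of $G$; this leaves the hypotheses, the quantity $|A+B|$, and the property ``$A+B$ is an arithmetic progression'' unchanged. Let $H=\{g\in G:g+(A+B)=A+B\}$ and apply Kneser's addition theorem: $|A+B|\ge|A+H|+|B+H|-|H|$. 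If $H=\{0\}$ this is the claim. If $H=G$ then $A+B=G$, a full arithmetic progression, and the bound holds (reading it as $|A+B|\ge\min(|G|,|A|+|B|-1)$). If $\{0\}\subsetneq H\subsetneq G$, then $H$ contains no generator of $G$, so $B\cap H=\{0\}$; hence one coset of $H$ meets $B$ in a single point, $|B+H|\ge|B|+|H|-1$, and together with $|A+H|\ge|A|$ Kneser again yields $|A+B|\ge|A|+|B|-1$. Of course this inequality is precisely Chowla's theorem and may simply be quoted.

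For the equality case, suppose $|A+B|=|A|+|B|-1$. The plan is to reduce to trivial stabilizer and then invoke the structure theory. Tracking equality through the argument above gives $|A+H|=|A|$ and $|B+H|=|B|+|H|-1$; passing to the cyclic quotient $G/H$, the images $\overline A,\overline B$ still obey the Chowla condition (because $B\cap H=\{0\}$), still satisfy $|\overline A+\overline B|=|\overline A|+|\overline B|-1$, and now $\mathrm{Stab}(\overline A+\overline B)=\{0\}$; moreover $A+B$ is an arithmetic progression in $G$ iff $\overline A+\overline B$ is one in $G/H$. So we may assume $\mathrm{Stab}(A+B)=\{0\}$. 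Kemperman's structure theorem (in the light form quoted at the beginning of \cite{KEM}) then forces $A$ and $B$ to be arithmetic progressions with a common difference $d$; applying the Chowla condition to $B$ (using $|B|\ge2$) shows $d$ must generate $G$, so $A+B$ is the arithmetic progression $\{a_0+b_0+id:0\le i\le|A|+|B|-2\}$. When $G=\zp$ the Chowla condition is automatic and this step is exactly Vosper's theorem, with the cases $|A+B|\in\{p-1,p\}$ handled directly ($G$ and $G$ minus a point are themselves arithmetic progressions).

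Conversely, if $A+B$ is an arithmetic progression with $|A+B|=|A|+|B|-1$ there is nothing to do; the real content is that, under the Chowla condition and $|A|,|B|\ge2$, an arithmetic-progression sumset forces $A$ and $B$ themselves to be arithmetic progressions with common difference a generator of $G$, after which the length is a one-line count. I expect the main obstacle to be the forward direction of the equality case: the normalizations and the converse are routine, but pinning down when $|A+B|=|A|+|B|-1$ needs the full strength of Kemperman's classification (equivalently Vosper's theorem over $\zp$) together with a careful check that the Chowla hypothesis rules out every structural possibility except the arithmetic progression.
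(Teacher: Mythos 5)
First, note that the paper does not prove this statement at all: Theorem A is imported verbatim from Chowla and Kemperman as a known result, so there is no internal proof to compare yours against; what matters is whether your argument is sound. Your Kneser-based derivation of the bound is fine (including your correct observation that it must be read as $|A+B|\ge\min(|G|,|A|+|B|-1)$). The genuine gap is in the equality case, at the step ``$A+B$ is an arithmetic progression in $G$ iff $\overline A+\overline B$ is one in $G/H$.'' The lifting direction is false: the preimage of an arithmetic progression under the quotient map need not be an arithmetic progression. For instance, the preimage in $\Z_{12}$ of the progression $\{0,1\}\subset\Z_{12}/\{0,6\}\cong\Z_6$ is $\{0,1,6,7\}$, which is not a progression in $\Z_{12}$. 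Moreover this is not a fixable presentation issue, because under the bare Chowla hypothesis the ``equality $\Rightarrow$ progression'' claim you are trying to prove is itself false for composite $|G|$: take $G=\Z_{12}$, $A=\{0,6\}$, $B=\{0,1,7\}$ (with $b=0$ both nonzero elements $1,7$ of $B$ generate $\Z_{12}$); then $|A+B|=4=|A|+|B|-1$ but $A+B=\{0,1,6,7\}$ is not an arithmetic progression. So no argument can close the gap in the generality stated. A secondary weak point of the same reduction: after passing to $G/H$ you may have $|\overline A|=1$ (as in the example above), and then equality in the quotient is automatic and Kemperman's classification forces no structure at all, so even the quotient step does not deliver what you need.

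The statement is salvageable, and your scheme essentially works, under Pollard's condition (every nonzero element of $B-B$ generates $G$), which is in fact all the paper ever uses: when Theorem A is invoked in the proof of Theorem 1, the set $B=G_u$ is contained in $w(E)$, so $B-B\subseteq w(E)-w(E)$. Under that hypothesis $B$ meets each coset of a proper nontrivial subgroup $H$ at most once, hence $|B+H|=|B||H|$, and Kneser gives $|A+B|\ge|A|+(|B|-1)|H|>|A|+|B|-1$ whenever $|H|\ge 2$ and $|B|\ge 2$; thus equality forces the stabilizer of $A+B$ to be trivial, the quotient-and-lift step disappears, and one can apply the aperiodic case of Kemperman's structure theorem (or Vosper when $|G|$ is prime) directly. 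I recommend you rework the equality case along these lines, flagging explicitly that the Chowla condition alone does not suffice for the ``moreover'' part; your remark that the literal ``if'' direction of the stated equivalence is empty (a progression sumset does not force equality) is also correct and worth stating once, since the only direction ever used is equality implies progression.
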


A shortly proved generalization of this result to non-abelian groups is obtained in \cite{halgebra}.

Zero-sum problems form another developing area in Additive Combinatorics having several applications.
 The   Erd{\H o}s-Ginzburg-Ziv Theorem \cite{egz} was the starting point of this area. This result states that
a sequence of elements of an abelian group $G$ with length $\ge 2|G|-1$ contains a zero-sum subsequence of length $=|G|$.

The reader may find some details on these two areas of Additive Combinatorics in the
text books:  Nathanson \cite{NAT},  Geroldinger-Halter-Koch \cite{geroldinger} and Tao-Vu \cite{tv}.
More specific questions may be found in Caro's survey paper \cite{caro}.

The notion of a matroid was introduced by Whitney in 1935 as a generalization of a matrix.
Two pioneer works connecting matroids and Additive Combinatorics are due to
Schrijver-Seymour \cite{SS}, Dias da Silva-Nathanson \cite{DN}. Recently, in  \cite{IS}, orientability of matroids is naturally related with an open problem on Bernoulli matrices.

Stating the first  result requires some vocabulary:

Let $E$ be a finite set. The set of the subsets of $E$ will be denoted by $2^E.$

A {\it matroid} over   $E$  is an ordered pair  $(E,\mathcal B)$  where  $\mathcal B\subseteq 2^E$    satisfies the following axioms:

\begin{itemize}
  \item[(B1)]  $\mathcal B\not=\emptyset$.
  \item[(B2)] For all   $B,B'\in \mathcal B,$  if  $B\subseteq B'$  then  $B=B'$.
  \item[(B3)] For all   $B,B'\in \mathcal B$  and  $x\in B\setminus B',$  there is a $y\in B'\setminus B$  such that  $(B\setminus \{x\}) \cup \{y\}\in \mathcal B$.

\end{itemize}

A set  belonging to $\mathcal B$  is called a {\it basis } of the matroid $M.$

The {\it rank } of  a subset  $A\subseteq E$  is by definition   $r_M(A):= max \{|B\cap A|: B \ \mbox{is a basis  of} \ M\}$. We write $r(M)=r(E).$ The reference to $M$ could be omitted.  A {\it hyperplane} of the matroid $M$ is a maximal subset of $E$ with rank  $=r(M)-1$.

 The {\it uniform} matroid of rank $r$ on a set $E$ is by definition ${\cal U}_r(E)=(E, {E\choose r}),$
where  ${E\choose r}$ is the set of all $r$-subsets of $E.$  Let $M$ be a matroid on $E$ and let $N$ be a matroid on $F.$ We define the direct sum:

$$M\oplus N=(E\times \{0\}\cup F\times \{1\}, \{ B\times \{0\} \cup C\times \{1\}: B \ \mbox{ is a base of}\  M
\  \mbox{and} \ C \ \mbox{ is a base of} \ N\}.$$

Let    $w:E\longrightarrow G$ be a weight function, where   $G$  is an abelian group. The weight of a subset $X$
is by definition $$X^w=\sum _{x\in X} w(x).$$
The set of distinct base weights is
$$M^w=\{B^w : B\ is\ a\ basis\ of\ M\}.$$

Suppose now $|G|=p$ is a prime number.
Schrijver and Seymour proved that $|M^w|\ge \min (p,\sum _{g\in G} r (w^{-1}(g))-r (M)+1).$ Let $A$ and $B$ be subsets of $G.$ Define  $w: A\times \{0\}\cup B\times \{1\},$
by the relation $w(x,y)=x.$
Then $$({\cal U}_1(A)\oplus {\cal U}_1(B))^w=A+B.$$
Applying their result to this matroid, Schrijver and Seymour obtained the  Cauchy-Davenport Theorem.

Let $x_1, \ldots, x_{2p-1}\in G.$ Consider the uniform matroid  $M={\cal U}_p(E),$  of rank  $p$  over the set
$E=\{1, \ldots ,2p-1\},$ with weight function  $w(i)=x_i.$
In order to prove the Erd{\H o}s-Ginzburg-Ziv Theorem \cite{egz}, one may clearly assume that no element is repeated $p$ times. In particular for every $g\in G,$ $r(w^{-1}(g))=|w^{-1}(g)|.$ Applying  Schrijver and Seymour to this matroid we have:
$$|M^w|\ge \min (|G|,\sum _{g\in G} r (w^{-1}(g))-r (M)+1)= \min (p,\sum _{g\in G} |w^{-1}(g)|-p+1)=p.$$
Thus Schrijver-Seymour result also implies the Erd{\H o}s-Ginzburg-Ziv Theorem \cite{egz} in a prime order.

In the present work, we prove the following result:

\begin{theorem} \label{main}
Let $G$ be a cyclic group,  $M$  be a  matroid on a finite set $E$ with $r(M)\ge 1$ and let  $w:E\longrightarrow G$ be a weight function. Assume moreover that every non-zero element  of $w(E)-w(E)$ generates $G.$
Then
\begin{equation}\label{ss}
|M^w|\ge \min (|G|,\sum _{g\in G} r (w^{-1}(g))-r (M)+1),
\end{equation} where $M^w$ denotes the set of distinct base weights.
Moreover, if  Equality holds in (\ref{ss}) then
one of the following conditions holds:

\begin{itemize}
  \item[(i)] $r(M)=1$ or  $M^w$ is an  arithmetic progression.
  \item[(ii)]  There is a hyperplane  $H$  of  $M$  such
  that $M^w= g+(M/H)^w,$  for some $g\in G.$
\end{itemize}

\end{theorem}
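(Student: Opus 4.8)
The plan is to prove the inequality by induction on $|E|$ (or on $\sum_g r(w^{-1}(g))$), reducing the matroid case to the group-theoretic estimate of Theorem A. The natural reduction is via the matroid operations of deletion and contraction together with the identity relating base weights of $M$ to those of $M\setminus e$ and $M/e$ for a well-chosen element $e$. Concretely, if $e$ is not a loop or coloop, every basis of $M$ either avoids $e$ (and is a basis of $M\setminus e$) or contains $e$ (and $B\setminus\{e\}$ is a basis of $M/e$); hence $M^w = (M\setminus e)^w \cup (w(e)+(M/e)^w)$. One then wants to pick $e$ so that the ranks behave well: contracting an element of weight $g$ drops $r(w^{-1}(g))$ by one and drops $r(M)$ by one, so the target quantity $\sum_g r(w^{-1}(g)) - r(M) + 1$ is unchanged under contraction, while deletion can only decrease it. The key group-theoretic input is that $M^w$ contains a sumset-like structure: writing $w(E) = \{g_1,\dots,g_k\}$ with multiplicities $r(w^{-1}(g_i)) = m_i$, one should exhibit $M^w$ as containing (a translate of) a sum of arithmetic-progression-like pieces, or more directly induct so that at each step we add a set $A$ (the current base-weight set) to a two-element set $\{0, g_i - g_j\}$, invoking Chowla's half of Theorem A since every nonzero element of $w(E)-w(E)$ generates $G$.

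**For the inequality itself**, I would set up the induction so that the base case is $r(M)=1$: then $M^w = w(\{e : e \text{ is not a loop}\})$, which has size exactly the number of distinct non-loop weights, and one checks this matches (or exceeds) $\min(|G|, \sum_g r(w^{-1}(g)) - 1 + 1) = \min(|G|, \sum_g r(w^{-1}(g)))$ — note $r(w^{-1}(g)) \le 1$ here so the sum counts distinct weights. For the inductive step with $r(M) \ge 2$, choose an element $e$ of some weight $g$ with $r(w^{-1}(g)) \ge 1$ that is not a coloop (such $e$ exists once $r(M)\ge 2$ and the sum exceeds $r(M)$, else the bound is vacuous); apply the identity $M^w = (M\setminus e)^w \cup (w(e)+(M/e)^w)$, apply induction to $M/e$, and then use that adding a translate can be analyzed via Theorem A applied to $A = (M/e)^w$ and the set of "differences of available weights." The point is that $(M\setminus e)^w$ and $w(e)+(M/e)^w$ overlap in a controlled way governed by basis exchange (B3), which lets a difference $w(e)-w(f)$ act as the generator needed for Chowla's condition.

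**For the equality analysis**, I would trace through where slack can fail to appear. Equality in (\ref{ss}) forces equality at every application of Theorem A in the induction, which by the equality clause of Theorem A forces each intermediate $M^w$ (for the contraction-minors arising) to be an arithmetic progression — giving case (i) — unless the induction degenerates, i.e. unless at some stage the deletion term $(M\setminus e)^w$ was not strictly smaller and the recursion "stops early" at a hyperplane. That degenerate scenario is exactly when the element $e$ we contract is forced (every basis contains it, or more precisely $M\setminus e$ has smaller rank so $e$ lies in no "independent complement"), which translates to the existence of a hyperplane $H$ with $M^w = g + (M/H)^w$, case (ii). Making this dichotomy precise — showing that failure of the arithmetic-progression conclusion propagates down to a single hyperplane rather than some messier minor — is **the main obstacle**; it requires a careful structural argument about how equality in Chowla/Kemperman interacts with the matroid exchange axiom, likely isolating the case where $w$ restricted to a cocircuit (complement of a hyperplane) is "spread out" versus concentrated, and then showing the spread-out case yields arithmetic progressions while the concentrated case yields the hyperplane reduction. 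I would handle this by a separate lemma characterizing when $|M^w| = |(M\setminus e)^w|$ can hold alongside equality in the rank bound, and expect to need the full strength of Kemperman's equality case (not just Chowla's) precisely here.
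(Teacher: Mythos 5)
There is a genuine gap, and it sits exactly where your sketch leans on the claim that the target quantity $\sum_{g} r(w^{-1}(g))-r(M)+1$ is unchanged when you contract an element of weight $g$. That is false in general: for a non-loop $e$ one has $r_{M/e}(X)=r_M(X\cup\{e\})-1$, so the rank of \emph{every} weight class whose closure contains $e$ drops by one, not just the class of $w(e)$. Writing $G_e=\{g\in G:\ e\in cl(w^{-1}(g))\}$, the target quantity drops by exactly $|G_e|-1$ under contraction, so to close the induction you must recover $|G_e|-1$, while your proposed device (adding a single two-element set $\{0,g_i-g_j\}$ via Chowla) recovers at most $1$. The paper supplies precisely the missing ingredient: Lemma \ref{sslem} (Schrijver--Seymour), which asserts $(M/u)^w+G_u\subseteq M^w$ for every non-loop $u$ and is proved by a short circuit/exchange argument; Theorem \ref{vosper} applied to $A=(M/u)^w$ and $B=G_u$ (the Pollard hypothesis on $w(E)$ gives Chowla's condition for $G_u\subseteq w(E)$) then yields the full gain $|G_u|-1$, and induction on rank finishes the inequality. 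Your deletion--contraction identity $M^w=(M\setminus e)^w\cup\bigl(w(e)+(M/e)^w\bigr)$ is true but only bounds $|M^w|$ by a maximum, not by a sum, and the ``controlled overlap governed by (B3)'' is never turned into an inequality.

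The equality analysis in your proposal is likewise not a proof: you yourself identify the progression-versus-hyperplane dichotomy as the main obstacle and leave it open. In the paper it is settled with the same lemma: assuming equality and the failure of (i), a basis-exchange argument produces a non-loop $u$ with $|(M/u)^w|\ge 2$; equality then forces $|M^w|=|(M/u)^w|+|G_u|-1$, and Kemperman's clause in Theorem \ref{vosper} (applicable since both sets have at least two elements) makes $M^w$ an arithmetic progression unless $|G_u|=1$, in which case $M^w=w(u)+(M/u)^w$, and induction together with Lemma \ref{contraction} (contracting $u$ or $cl(\{u\})$ gives the same bases) produces the hyperplane $H$ with $M^w=g+(M/H)^w$. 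So the structure you hoped for does exist, but it is organized around contraction of a single element and the set $G_u$, not around a deletion/contraction split; without Lemma \ref{sslem} and the correct accounting of how the bound changes under contraction, neither your inequality step nor your equality step closes.
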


If $G$ has a prime order, then the condition on $w(E)-w(E)$ holds trivially. In this case (\ref{ss})
reduces to the result of Schrijver-Seymour.

\section{Terminology and Preliminaries}

Let $M$ be a matroid on a finite set $E$.
 One may see easily from the definitions that all bases a matroid have the same cardinality.
 A {\it circuit } of $M$ is a minimal set not contained in a base. A loop is an element $x$ such that $\{x\}$
 is a circuit. By the definition  bases contain no  loop.
The closure of a subset  $A\subseteq E$  is by definition  $$cl(A)=\{ x\in A:\ r(A\cup x)=r(A)\}.$$ Note that an element  $x\in cl(A)$  if and only if   $x\in A,$  or   there is circuit  $C$  such   $x\in C$  and  $C\setminus \{x\}\subseteq A$.


Given a matroid  $M$  on a set $E$ and a subset  $A\subseteq E.$ Then
$\mathcal B/A:=\{J\setminus A: J \mbox{ is a  basis of }\  M \ \mbox{with}\  |B\cap A|=r(A)  \}$. One may see easily that $M/A=(E\setminus A, \mathcal B/A)$ is a matroid on $E\setminus A.$ We say that this  matroid is obtained from  $M$  contracting  $A.$ Notice that $r_{M/A}(X)=r_M(X\cup A)-r_M(A).$


Recall the following easy lemma:

\begin{lemma}\label{contraction}

Let  $M$  be a  matroid on a finite set $E$ and let $U,V$ be disjoint subsets of $E.$ Then

 \begin{itemize}
   \item $M/U$ and $M/cl(U)$ have the same bases. In particular, $(M/U)^w=(M/cl(U))^w.$
   \item  $(M/U)/V=M/(U\cup V).$
 \end{itemize}
\end{lemma}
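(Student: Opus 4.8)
The plan is to reduce both assertions to statements about the rank function, using the standard principle that a matroid is determined by its rank function: a set $I$ is independent (contained in some basis) iff $r(I)=|I|$, and a set is a basis iff it is independent with cardinality equal to the rank of the whole ground set. Hence two matroids on the \emph{same} ground set with identical rank functions have identical families of bases, and I would compare $M/U$ with $M/cl(U)$, and $(M/U)/V$ with $M/(U\cup V)$, by comparing rank functions through the contraction formula $r_{M/A}(X)=r_M(X\cup A)-r_M(A)$ supplied above. Two preliminary facts drive everything: first, $r_M(cl(A))=r_M(A)$ for every $A\subseteq E$; and second, monotonicity of the closure, $A\subseteq A'\Rightarrow cl(A)\subseteq cl(A')$, which I would read off directly from the circuit description of $cl$ (if $y\in cl(A)\setminus A$ then some circuit $C$ has $y\in C$ and $C\setminus\{y\}\subseteq A\subseteq A'$, so $y\in cl(A')$).

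For the first item the subtlety is that $M/U$ and $M/cl(U)$ have different ground sets, $E\setminus U$ and $E\setminus cl(U)$, so ``same bases'' must mean equality of the two families as collections of subsets of $E$. I would first note that every $x\in cl(U)\setminus U$ is a loop of $M/U$, since $r_{M/U}(\{x\})=r_M(U\cup\{x\})-r_M(U)=0$ precisely because $x\in cl(U)$; as bases contain no loop, every basis of $M/U$ already lies in $E\setminus cl(U)$. Next I would establish the rank identity $r_M(X\cup cl(U))=r_M(X\cup U)$ from the sandwich $X\cup U\subseteq X\cup cl(U)\subseteq cl(X\cup U)$ (the right inclusion by monotonicity of $cl$) together with $r_M(cl(X\cup U))=r_M(X\cup U)$. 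Combined with $r_M(cl(U))=r_M(U)$, the contraction formula gives $r_{M/cl(U)}(X)=r_{M/U}(X)$ for all $X\subseteq E\setminus cl(U)$, while the top ranks agree because the extra elements $cl(U)\setminus U$ are loops and contribute nothing. Thus the two rank functions coincide on $E\setminus cl(U)$ with equal top rank, so the basis families agree; and since $M^w$ depends only on the set of bases, $(M/U)^w=(M/cl(U))^w$ is then immediate.

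The remaining input is $r_M(cl(A))=r_M(A)$, which I regard as the main obstacle, being the only step that is not purely formal. Using the characterization $x\in cl(A)\iff r_M(A\cup\{x\})=r_M(A)$, I would fix a maximal subset $I\subseteq A$ lying in some basis (so $|I|=r_M(A)$) and bound an arbitrary independent $J\subseteq cl(A)$: if $|J|>|I|$, the independence-augmentation property (a standard consequence of (B3)) yields $y\in J\setminus I$ with $I\cup\{y\}$ independent, whence $r_M(A\cup\{y\})\ge|I|+1>r_M(A)$, contradicting $y\in cl(A)$; therefore $r_M(cl(A))\le|I|=r_M(A)$, and monotonicity gives the reverse inequality. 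For the second item I would simply iterate the contraction formula: for disjoint $U,V$ and $X\subseteq E\setminus(U\cup V)$, contracting $V$ inside $M/U$ is legitimate because $V\subseteq E\setminus U$, and $r_{(M/U)/V}(X)=r_{M/U}(X\cup V)-r_{M/U}(V)=r_M(X\cup U\cup V)-r_M(U\cup V)=r_{M/(U\cup V)}(X)$, so the two matroids on $E\setminus(U\cup V)$ share a rank function and hence the same bases. This part is entirely routine once the contraction formula is in hand.
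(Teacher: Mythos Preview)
The paper does not actually prove this lemma: it is introduced with ``Recall the following easy lemma'' and left unproved, as is the contraction rank formula $r_{M/A}(X)=r_M(X\cup A)-r_M(A)$ that precedes it. So there is no argument in the paper to compare against; your task is simply to supply a correct proof compatible with the paper's definitions, and you have done so.

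Your argument is sound. The key points---that each $x\in cl(U)\setminus U$ is a loop of $M/U$, the sandwich $X\cup U\subseteq X\cup cl(U)\subseteq cl(X\cup U)$ giving $r_M(X\cup cl(U))=r_M(X\cup U)$, and the iteration of the rank formula for the second item---are exactly the standard route, and they mesh with the paper's setup since the paper explicitly records the contraction rank formula for your use. One small remark: you invoke the independence-augmentation axiom as ``a standard consequence of (B3)''. That is true via the usual cryptomorphism between bases and independent sets, but since the paper only lists (B1)--(B3) you might prefer to phrase the proof of $r_M(cl(A))=r_M(A)$ directly in terms of bases: take a basis $B$ of $M$ with $|B\cap A|=r_M(A)$, and check that $|B\cap cl(A)|=|B\cap A|$ because any $y\in B\cap(cl(A)\setminus A)$ would lie in a circuit $C$ with $C\setminus\{y\}\subseteq A$, forcing $(B\cap A)\cup\{y\}$ to be dependent inside $B$. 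Either way the conclusion is the same, and your write-up is correct as it stands.
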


For more details on matroids, the reader may refer to one of the text books: Welsh \cite{welsh} or White \cite{white}.

For   $u\in E,$  we put
$$G_{u}:=\{g\in G:\  u\in cl(w^{-1}(g))\}.$$

We recall the following lemma proved by  Schrijver and Seymour in \cite{SS}:

\begin{theirlemma}\label{sslem}

Let  $M$  be a matroid on a finite set $E$ and let  $w:E\longrightarrow G$ be a weight function. Then for every
non-loop element $u\in E,$
$$ (M/u)^w +G_u \subseteq M^w.$$

\end{theirlemma}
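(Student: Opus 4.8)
The plan is to unfold both sides of the asserted containment into statements about individual bases and then produce the required base of $M$ by a single exchange. An element of $(M/u)^w+G_u$ has the form $(B')^w+g$, where $B'$ is a basis of $M/u$ and $g\in G_u$; so it suffices to exhibit, for each such pair $(B',g)$, a basis $B$ of $M$ with $B^w=(B')^w+g$. First I would record the description of the bases of the contraction. Since $u$ is a non-loop, $r(\{u\})=1$, so by the definition of $\mathcal B/u$ the bases of $M/u$ are exactly the sets $B_0\setminus\{u\}$ where $B_0$ is a basis of $M$ containing $u$. Thus $B':=B_0\setminus\{u\}$ is independent of rank $r(M)-1$, we have $(B')^w=B_0^w-w(u)$, and, because $B_0$ is a basis, $u\notin cl(B')$.

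With $F:=B'$ fixed, the whole statement reduces to the following exchange claim: for every $g\in G_u$ there exists $y\in w^{-1}(g)$ such that $F\cup\{y\}$ is a basis of $M$. Granting this, $F\cup\{y\}$ is a basis of weight $(B')^w+w(y)=(B')^w+g$ (since $w(y)=g$), which places $(B')^w+g$ in $M^w$, as required. This single formulation already absorbs the easy case $w(u)=g$, where one may take $y=u$ and recover $B_0$ itself; the genuinely new information is carried by the circuit characterization of closure, namely that $g\in G_u$ with $w(u)\ne g$ produces a circuit $C\ni u$ with $C\setminus\{u\}\subseteq w^{-1}(g)$.

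To prove the exchange claim I would argue by contradiction through the closure. By definition $g\in G_u$ means $u\in cl(w^{-1}(g))$. Suppose no admissible $y$ existed. Since $F$ is independent of rank $r(M)-1$, the set $F\cup\{y\}$ is a basis precisely when $y\notin cl(F)$; hence the failure of the claim says that every $y\in w^{-1}(g)$ lies in $cl(F)$, that is $w^{-1}(g)\subseteq cl(F)$. Applying monotonicity and idempotence of the matroid closure yields $cl(w^{-1}(g))\subseteq cl(cl(F))=cl(F)$, whence $u\in cl(F)$, contradicting $u\notin cl(F)$. This produces the desired $y$ and completes the argument.

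The only real content is the exchange claim, and its crux is translating the closure condition $u\in cl(w^{-1}(g))$ into an honest basis exchange; the contrapositive formulation via $w^{-1}(g)\subseteq cl(F)$ makes this transparent, so I expect no serious obstacle beyond carefully invoking the standard properties of $cl$ (monotonicity and idempotence) together with the rank characterization of when $F\cup\{y\}$ is a basis.
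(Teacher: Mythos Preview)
Your proof is correct and follows essentially the same approach as the paper's: both hinge on the observation that a basis $B'$ of $M/u$ satisfies $u\notin cl(B')$, while $u\in cl(w^{-1}(g))$, forcing some element of $w^{-1}(g)$ to lie outside $cl(B')$ and hence to extend $B'$ to a basis of $M$. The only cosmetic difference is that the paper splits into cases $g=w(u)$ and $g\neq w(u)$ and, in the latter, names an explicit circuit $C\ni u$ with $C\setminus\{u\}\subseteq w^{-1}(g)$ before running the same closure contradiction, whereas you absorb both cases into a single contrapositive argument using monotonicity and idempotence of $cl$.
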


\begin{proof} Take a basis  $B$  of  $M/u$  and an element  $g\in G_u$. If  $g=w(u)$  then, by definition of contraction,
 $B\cup \{u\}$  is a basis of  $M$ and  $B^w+w(u)\in M^w$. If  $g\neq w(u),$  there is a circuit  $C$  containing  $u$  such that
$\emptyset\not= C\setminus \{u\}\subseteq w^{-1}(g)$. For some  $v\in C\setminus  \{u\}$  the subset  $B\cup  \{v\}$  must be a basis of  $M$  otherwise  $C\setminus  \{v\}\subseteq cl(B)$, implying that  $u\in cl(B),$ in contradiction with the assumption that  $B$  is a basis of  $M/u$. Therefore  $(B\cup  \{v\})^w=B^w+g\in M^w$.\end{proof}

\section{Proof of the main result}

We shall now prove our result:

\begin{proofof}{Theorem}{\ref{main}}

 We first prove (\ref{ss}) by induction on the rank of  $M$. The result holds trivially if $r(M)=1.$
 Since $r(M)\ge 1,$ $M$ contains a non-loop element.
 Take an arbitrary non-loop element $y.$

\begin{eqnarray}
|M^w|&\ge&|(M/y)^w+G_y |\nonumber \\&\ge& |(M/y)^w|+|G_y|-1\nonumber \\&\ge& \sum _{g\in G} r (w^{-1}(g))-r (M)+1.\label{pmain}
\end{eqnarray}

The first inequality follows from Lemma \ref{sslem}, the second follows by  Theorem \ref{vosper} and the third is a direct consequence of the definitions of  $M/u$  and  $G_u$. This proves the first part of the theorem.

Suppose now that Equality holds in (\ref{ss}) and that  Condition (i)  is not satisfied.
In particular $r(M)\ge 2.$ Also $|M^w|\geq 2,$ otherwise $M^w$ is a progression, a contradiction.

We claim that there exits a non-loop element  $u\in E$  such that  $|(M/u)^w|\geq 2.$  Assume on the contrary that for every non-loop element  $u\in E$  we have  $|(M/u)^w|= 1$. Then every pair of bases  $B_1,B_2$  of  $M$  with  $B_1^w\not=B_2^w$. satisfies  $B_1\cap B_2=\emptyset$ otherwise for every  $z\in B_1\cap B_2$, $|(M/z)^w|\geq 2$. Now, for  every  $z\in B_1,$ there is $z'\in B_2$ such that $C=(B_1\setminus \{z\})\cup \{z'\}$ is a base of M. For such a base  $C$,  $B_1\cap C\neq \emptyset,$ $B_2\cap C\neq \emptyset,$ and we must have $B_1^w =C^w= B_2^w ,$ a contradiction.

Applying the chain of inequalities proving (\ref{pmain})  with  $y=u$. We have

\begin{equation}\label{V}
|M^w|= |(M/u)^w+G_u|=|(M/u)^w|+|G_u|-1.
\end{equation}

Note that $w(E\setminus \{u\})\subset w(E)$, clearly verifies  the Pollard condition. If  $|G_u|\geq 2$ Theorem \ref{vosper} implies that  $M^w$ is a progression and thus  $M$  satisfies Condition (i) of the theorem, contradicting our assumption on  $M$. We must have  $|G_u|=1.$

Thus  $G_u=\{w(u)\}$ and  $M^w=w(u)+(M/u)^w.$

Since the translate of a progression is a progression, $M/u$ is not a progression.
By Lemma \ref{contraction}, $(M/u)$ and $M/cl(u)$ have the same bases and thus the result holds if
$r(M)=2.$ If $r(M)>2,$ then by the Induction hypothesis there is a hyperplane $H$ of $M/u$ such that
$(M/u)^w=(M/u/H)^w=(M/(Cl(\{u\}\cup H))^w,$  and (ii) holds.\end{proofof}

\begin{corollary}\label{vospers}(Vosper's Theorem \cite{vosper1,vosper2})
Let $p$ be a prime and let $A,B$ be subsets of $\Z_p$ such that
 $|A|,|B|\ge 2.$

If  $|A+B|= |A|+|B|-1<p$ then one of the following holds:
\begin{itemize}
  \item[(i)] $c-A= (\Z_p\setminus B)$.
  \item[(ii)] $A$ and $B$ are arithmetic progressions with a same difference.
\end{itemize}

\end{corollary}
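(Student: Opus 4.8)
The plan is to deduce Vosper's Theorem from Theorem \ref{main} by applying it to the matroid $M = {\cal U}_1(A) \oplus {\cal U}_1(B)$ with the weight function $w(x,0)=x$, $w(x,1)=x$, exactly as in the introduction, so that $M^w = A+B$. First I would record the numerical data: $r(M)=2$, and for each $g\in G=\Z_p$ the fiber $w^{-1}(g)$ meets $A\times\{0\}$ in at most one point and $B\times\{1\}$ in at most one point, so $r(w^{-1}(g)) = [g\in A] + [g\in B]$, hence $\sum_{g} r(w^{-1}(g)) = |A|+|B|$. Thus the bound (\ref{ss}) reads $|A+B| \ge \min(p, |A|+|B|-1)$, and the hypothesis $|A+B| = |A|+|B|-1 < p$ is precisely the equality case with the minimum attained by the second term. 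Since $p$ is prime the Pollard condition on $w(E)-w(E)$ is automatic, so Theorem \ref{main} applies.

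Now I invoke the equality description. Condition (i) of Theorem \ref{main} splits: $r(M)=2 \ne 1$, so either $M^w = A+B$ is an arithmetic progression, or case (ii) holds. If $A+B$ is an arithmetic progression with difference $d$, then a standard and short argument (using $|A+B|=|A|+|B|-1$ together with $A+B \supseteq A+b$ and $A+B \supseteq a+B$ for any $a\in A$, $b\in B$) forces both $A$ and $B$ to be arithmetic progressions with the same common difference $d$; this is alternative (ii) of the corollary. So the remaining work is to show that case (ii) of Theorem \ref{main} yields alternative (i) of the corollary.

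In case (ii) there is a hyperplane $H$ of $M$ with $M^w = g + (M/H)^w$ for some $g$. A hyperplane of $M = {\cal U}_1(A)\oplus {\cal U}_1(B)$ has rank $1$; the rank-$1$ flats of this direct sum are $A\times\{0\}$ together with $\{(b,1)\}$ for $b\in B$, and symmetrically $B\times\{1\}$ together with $\{(a,0)\}$ for $a\in A$. Contracting such a hyperplane collapses one of the two uniform summands, so $M/H$ is (isomorphic to) ${\cal U}_1(A)$ or ${\cal U}_1(B)$, whence $(M/H)^w$ is a translate of $A$ or of $B$. But $|M^w| = |A|+|B|-1$, so if $(M/H)^w$ is a translate of $A$ we get $|A| = |A|+|B|-1$, forcing $|B|=1$, contradicting $|B|\ge 2$; similarly the other sub-case forces $|A|=1$. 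Hence the only hyperplanes that can occur are the ``large'' ones — $H = A\times\{0\}$ or $H = B\times\{1\}$ — and for these one checks directly that the base weights of $M$ not lying over $H$ exhaust $G\setminus(\text{something})$: concretely, if $H = B\times\{1\}$ then the bases of $M$ are $\{(a,0),(b,1)\}$ and $M^w = A+B$, while $M/H = {\cal U}_1(A)$ up to the collapsed part, and the equality $M^w = g+(M/H)^w$ combined with $|M^w|=p-|\,\cdot\,|$-type counting pins down $A+B$ as the complement of a translate of $-A$, i.e. $c - A = \Z_p \setminus B$ for a suitable $c$. That is alternative (i).

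The main obstacle I expect is the bookkeeping in the last step: translating the abstract matroid statement ``$M^w = g + (M/H)^w$ for a hyperplane $H$'' into the concrete complementation identity $c-A = \Z_p\setminus B$, and in particular getting the constant $c$ right and verifying that the cardinalities force $A+B$ to be a full coset complement rather than something smaller. One has to be careful that $M/H$, after contracting a rank-$1$ flat, may acquire loops (the elements of the collapsed summand lying in $cl(H)$), and use Lemma \ref{contraction} to replace $H$ by $cl(H)$ so that $M/H$ is genuinely a uniform matroid on the surviving ground set; only then is $(M/H)^w$ literally a translate of $A$ or $B$. Everything else — the numerics, the automatic Pollard condition, and the ``arithmetic progression forces same difference'' sub-case — is routine.
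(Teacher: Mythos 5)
Your setup (the matroid ${\cal U}_1(A)\oplus{\cal U}_1(B)$, the rank computation, the automatic Pollard condition) matches the paper, but the way you map the two alternatives of Theorem \ref{main} onto the two alternatives of Vosper's Theorem is wrong, and this creates a genuine gap. First, case (ii) of Theorem \ref{main} does not produce conclusion (i) of the corollary: it simply cannot occur here. The only hyperplanes of $N={\cal U}_1(A)\oplus{\cal U}_1(B)$ are $A\times\{0\}$ and $B\times\{1\}$ (your ``rank-$1$ flats $\{(b,1)\}$'' are not flats, since $cl(\{(b,1)\})\supseteq B\times\{1\}$, and in any case a hyperplane must be a \emph{maximal} rank-$1$ set), and contracting either one gives $(N/H)^w$ equal to a translate of $B$ or of $A$, of cardinality strictly less than $|N^w|=|A|+|B|-1$ because $|A|,|B|\ge 2$ --- exactly the cardinality contradiction you yourself note. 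At that point the argument should stop and conclude that $A+B$ must be an arithmetic progression; your subsequent attempt to salvage ``large hyperplanes'' and extract $c-A=\Z_p\setminus B$ from $M^w=g+(M/H)^w$ is circular (those are the hyperplanes you just ruled out) and never becomes an actual argument.

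Second, and more seriously, your claim that ``$A+B$ an arithmetic progression forces $A$ and $B$ to be arithmetic progressions with the same difference'' is false precisely in the boundary case $|A+B|=p-1$, which is where conclusion (i) of the corollary comes from. Any subset of $\Z_p$ of size $p-1$ is an arithmetic progression (for every difference), yet one can take an arbitrary non-progression $A$ with $|A|\ge 2$ and set $B=\Z_p\setminus(c-A)$: then $|A+B|=|A|+|B|-1=p-1$, but $A$ is not a progression. So the ``standard short argument'' you invoke needs room to grow and only works when $|A+B|<p-1$. The paper's proof makes exactly this split: after ruling out case (ii) of Theorem \ref{main} by the cardinality comparison, it normalizes the difference to $1$ and distinguishes $|A+B|=p-1$, where the single missing element $c$ gives $c-A\subseteq\Z_p\setminus B$ and equal cardinalities force $c-A=\Z_p\setminus B$ (conclusion (i)), from $|A+B|<p-1$, where $|A+B+\{0,1\}|=|A|+|B|<p$ together with the Cauchy--Davenport Theorem forces $|A+\{0,1\}|=|A|+1$, hence $A$ (and likewise $B$) is a progression of difference $1$ (conclusion (ii)). You need to add this case analysis; without it the deduction does not go through.
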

\begin{proof} Consider the matroid $N=({\cal U}_1(A)\oplus {\cal U}_1(B))$  and its weight function  $w$ defined in the Introduction.  $H=A\times \{0\}$  and $H'=B\times \{1\}$ are the hyperplanes of $N$ and we have  $N^w=A+B$.

If  $|N^w|=|A|+|B|-1$  then Theorem \ref{main}  says that   $N$  must satisfy one of its conditions  (i) or (ii). Since by hypothesis  $|A|,|B|\geq 2$  we have  $|N^w|> \max (|A|,|B|) \ge |(N/H)^w|,|(N/H')^w|$  and we conclude that  $N^w$  must be an arithmetic progression with difference $d$. Without loss of generality we may take $d=1.$

{\bf Case} 1. $|A+B|=p-1$. Put $\{c\}=\Z_p\setminus (A+B)$.
We have $c-A\subset (\Z_p\setminus B)$. Since these sets have the same cardinality
we have $c-A= (\Z_p\setminus B)$.

{\bf Case} 2. $|A+B|<p-1$.

We have $|A+B+\{0,1\}|=|A+B|+1=|A|+|B|<p.$

We must have  $|A+\{0,1\}|=|A|+1$, since otherwise
by the Cauchy-Davenport Theorem,

\begin{eqnarray*}
|A+B|+1&=&|A+B+\{0,1\}|\\&=& |A+\{0,1\}+B|\\
&\ge&  (|A|+2)+|B|-1 =|A|+|B|+1,
\end{eqnarray*}
a contradiction.
It follows that $A$ is an arithmetic progression with difference $1$.
Similarly $B$ is an arithmetic progression with difference $1$.\end{proof}

\end{document}